\theoremstyle{plain} \newtheorem{theorem}{Theorem}[section]
\theoremstyle{plain} 
\theoremstyle{plain} \newtheorem{lemma}[theorem]{Lemma}
\theoremstyle{plain} \newtheorem{cor}[theorem]{Corollary}
\theoremstyle{plain} 
\theoremstyle{plain} \newtheorem{lsec}[theorem]{Line Segment Extension Conjecture}
\theoremstyle{plain} 
\theoremstyle{definition} 
\theoremstyle{definition} \newtheorem{notation}[theorem]{Notation}
\theoremstyle{remark} 
\theoremstyle{remark} \newtheorem{example}[theorem]{Example}
\DeclareMathOperator{\hdim}{dim_{\mathsf{H}}}
\DeclareMathOperator{\mdim}{dim_{\mathsf{M}}}
\DeclareMathOperator{\pdim}{dim_{\mathsf{P}}}
\DeclareMathOperator{\umdim}{{\overline{dim}_{\mathsf{M}}}}
\newcommand{\R}{\mathbb{R}}
\newcommand{\Rn}{\mathbb{R}^n}
\newcommand{\de}{\delta}
\newcommand{\su}{\subset}
\newcommand{\sm}{\setminus}
\newcommand{\cS}{{\mathcal{S}}}
\newcommand{\cL}{{\mathcal{L}}}
\newcommand{\cC}{{\mathcal{C}}}
\begin{document}

\title{Are lines much bigger than line segments?}

\author{Tam\'as Keleti}

\date{}

\keywords{Hausdorff dimension, lines, union of line segments,
Besicovitch set, Nikodym set, Kakeya Conjecture.}

\subjclass[2010]{28A78}

\address{Institute of Mathematics, E\"otv\"os Lor\'and University, 
P\'az\-m\'any P\'e\-ter s\'et\'any 1/c, H-1117 Budapest, Hungary}

\email{tamas.keleti@gmail.com}

\urladdr{http://www.cs.elte.hu/analysis/keleti}

\thanks{The author was supported by 
Hungarian Scientific Foundation grant no.~104178. 
Part of this research was done while the author was a visiting researcher
at the Alfr\'ed R\'enyi Institute of Mathematics,
whose hospitality is gratefully acknowledged.}

\begin{abstract}
We pose the following conjecture:

\noindent
($\star$) \emph{%For any positive integer $n$, 
If $A$ is the union of line segments in $\Rn$, and $B$ is the  
union of the corresponding full lines then the Hausdorff dimensions 
of $A$ and $B$ agree.}

We prove that this conjecture would imply that every Besicovitch set 
(compact set that contains line segments in every direction) 
in $\Rn$ has Hausdorff dimension at least $n-1$ and (upper) Minkowski dimension $n$.
We also prove that conjecture ($\star$) holds if the Hausdorff dimension of $B$
is at most $2$, so in particular it holds in the plane.
\end{abstract}

\maketitle

\section{Introduction}
Let $A$ be the union of line segments in $\Rn$, and let $B$ be the  
union of the corresponding full lines. Can $B$ be much bigger than $A$?
According to Lebesgue measure \emph{yes}: Nikodym's classical
construction \cite{Ni} 
is a subset $F$ of the unit square $(0,1)^2$ with Lebesgue measure $1$ such 
that 
for each $x\in F$ there is a line $l_x$ through $x$ intersecting $F$ in the 
single point $x$. Thus if we take  inside $(0,1)^2$ an open subsegment 
of each $l_x$ with endpoint $x$
then the union of these open line segments must
have Lebesgue measure zero (since it is contained in $(0,1)^2\sm F$)
but by extending each open line segment only with the endpoints the new union
has Lebesgue measure $1$ (since it contains $F$). In higher dimension the 
line segments can be even pairwise disjoint: Larman \cite{La} constructed a 
compact set $F$ made up of a disjoint union of closed line segments in $\R^3$   
such that the union of the closed segments have positive Lebesgue
measure but the union of the corresponding open segments has Lebesgue measure 
zero.

In this paper we study the question if extending the line segments to full lines
can increase the Hausdorff dimension. We pose the following:

\begin{lsec}%[Line Segment Extension Conjecture]
%For any positive integer $n$, 
If $A$ is the union of line segments in $\Rn$, and $B$ is the  
union of the corresponding full lines then the Hausdorff dimensions 
of $A$ and $B$ agree.
\end{lsec}

This conjecture turns out to be closely related to the so called Kakeya Conjecture.
Recall that a compact subset of $\Rn$ is called a \emph{Besicovitch set} 
if it contains unit line segment in every direction. 
Besicovitch \cite{Be} constructed in 1919 such sets of Lebesgue measure zero.
The Kakeya Conjecture asserts that every Besicovitch set in $\Rn$ has Hausdorff (or at least upper Minkowski) dimension $n$. Davies \cite{Da} proved this in the plane in 1971. 

Although Kakeya Conjecture is very important in many areas of mathematics, especially in harmonic analysis, because of deep connections with
several important conjectures of different areas 
(see the survey papers \cite{Ta} and \cite{Wo99} about Kakeya Conjecture and these connections), all the known partial results in dimension at least three are much
weaker than the conjecture.

Wolff \cite{Wo} proved in 1995 that the Hausdorff dimension of a Besicovitch set 
in $\Rn$ is at least $\frac{n+2}{2}$, which is still the best estimate for
the Hausdorff dimension for $n=3,4$. 
The current best estimate for $n\ge 5$ is due to Katz and Tao \cite{KaTa} (2002) and is of
the form $An+B$ where $A=2-\sqrt 2 =0.585\ldots$ and $B=4\sqrt 2 - 5 = 0.6\ldots$.
For (upper) Minkowski dimension slightly better estimates exist for $n=3$ (Katz-{\L}aba-Tao \cite{KaLaTa} 2000), for $n=4$ ({\L}aba-Tao \cite{LaTa} 2001) and for $n\ge 24$ 
(Katz and Tao \cite{KaTa} 2002).

Clearly, Line Segment Extension Conjecture would imply that in the Kakeya problem it does
not matter if we consider line segments or full lines. But this would not be very interesting, this is what probably everybody expects, since the constructions for 
Besicovitch sets of zero Lebesgue measure also works for full lines. 
It might be much more surprising that the new conjecture would have not only the above 
consequence but in the following sense it would imply the 
Kakeya Conjecture itself.

\begin{theorem}\label{t:implication}

\begin{itemize}

\item[(i)]
The Line Segment Extension Conjecture for $n$ would imply that every
Besicovitch set in $\Rn$ has Hausdorff dimension at least $n-1$.

\item[(ii)] If the Line Segment Extension Conjecture holds for every $n\ge 2$ then
every Besicovitch set in $\R^n$ has packing and upper Minkowski dimension $n$.

\end{itemize}

\end{theorem}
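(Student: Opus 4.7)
The plan is to prove part~(i) by applying the Line Segment Extension Conjecture to the Besicovitch set itself and then arguing that the set of extending lines has to be large. Concretely: write the Besicovitch set as $A = \bigcup_{v \in S^{n-1}} \sigma_v$, with $\sigma_v$ a unit segment in direction $v$, and let $L_v$ denote the full line containing $\sigma_v$ and $B = \bigcup_v L_v$. LSEC applied to $A$ gives $\hdim A = \hdim B$, reducing the task to showing that any set $B \subset \Rn$ containing a full line in every direction has $\hdim B \ge n-1$.

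For this reduction I would use projection. Fix an orthogonal projection $\pi \colon \Rn \to \R^{n-1}$. Since $\pi|_{S^{n-1}}$ has one-dimensional fibres, every direction $w \in S^{n-2}$ is the image $\pi(v)/|\pi(v)|$ of an entire great circle of $v \in S^{n-1}$. Each such $v$ contributes a line $\pi(L_v) \subset \pi(B)$ in direction $w$, so $\pi(B)$ contains a one-parameter family of parallel lines in every direction. I would then argue that for some (or generically every) direction $w$ the transverse offsets of these parallel lines cover a set of positive $(n-2)$-dimensional Hausdorff measure in $w^\perp$; this would give $\hdim \pi(B) \ge n-1$, whence $\hdim B \ge n-1$. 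The main obstacle is ruling out the pathological case in which these offset sets collapse for every $w$; I would attack that by a Fubini-type averaging over the extra circle parameter, using the Borel measurability of the parametrisation inherited from~$A$.

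For part~(ii) the plan is to bootstrap (i) by a lifting construction. Given $A \subset \Rn$ Besicovitch, set
\[
  \tilde A \ = \ \bigcup_{\theta \in [0,2\pi]} R_\theta\bigl(A \times \{0\}\bigr) \ \subset \ \R^{n+1},
\]
where $R_\theta$ is rotation in the $(e_n, e_{n+1})$-plane. The map $(v,\theta) \mapsto R_\theta(v,0)$ is surjective onto $S^n$, so $\tilde A$ is a Besicovitch set in $\R^{n+1}$. Being a Lipschitz image of $A \times [0,2\pi]$, $\tilde A$ satisfies $\umdim \tilde A \le \umdim A + 1$ and the analogous inequality for packing dimension. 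Part~(i) in dimension $n+1$ (using LSEC in $\R^{n+1}$) gives $\hdim \tilde A \ge n$, hence $\umdim \tilde A \ge n$, and therefore $\umdim A \ge n-1$.

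To reach the full value $\umdim A = n$ (and the corresponding packing bound) I would iterate this lift: repeated lifting produces Besicovitch sets in $\R^{n+k}$ of Hausdorff dimension at least $n+k-1$ for every $k$, while the Lipschitz estimate gives $\umdim$ of the lift $\le \umdim A + k$. The main obstacle is that this bookkeeping only recovers $\umdim A \ge n-1$ at every iteration; to genuinely push up to $n$, one must sharpen the Minkowski estimate on the lift, exploiting the structure of the rotation together with LSEC in the higher-dimensional ambient space so that the extra $\theta$-parameter is not simply added to the base dimension but is instead "absorbed" via the line-extension equality.
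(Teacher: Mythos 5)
Both halves of your plan stall at exactly the points you yourself flag, and in both cases the missing ingredient is the idea the paper actually uses. For (i): applying LSEC to the Besicovitch set itself reduces the problem to showing that a union of full lines, one in every direction, has Hausdorff dimension at least $n-1$. That statement is itself far from trivial, and your projection argument founders precisely on the case you mention: all the lines over a great circle of directions may project to a single line (equivalently, lie in a common $2$-plane), and no Fubini-type averaging obviously rescues this --- slicing arguments only yield additive inequalities of the form $\hdim X_1+\pdim X_0\ge n-1$ for two transversal slices, which bounds neither slice from below. The paper sidesteps this entirely with a projective transformation: the locally Lipschitz map $P(x,y)=(\frac1x,\frac yx)$ sends the punched line $l(a,b)\sm\{(0,b)\}$ to $l(b,a)\sm\{(0,a)\}$, so it converts ``a segment in every direction $a$'' into ``a segment whose \emph{line extension} passes through $(0,a)$''. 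Applying LSEC to the \emph{transformed} segments therefore produces a set containing the entire hyperplane $\{0\}\times\R^{n-1}$, and the bound $n-1$ falls out immediately. The order of operations --- transform first, extend to lines second --- is the whole trick, and it is exactly what your version lacks.

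For (ii): your rotation lift does produce a Besicovitch set in $\R^{n+1}$, but, as you concede, it only returns $\umdim A\ge n-1$: the lift adds $1$ to both the ambient dimension and the dimension of the set, so the deficit of $1$ coming from part (i) is never eroded, no matter how often you iterate. The paper amortizes that deficit by taking Cartesian powers instead of rotations: $B^k$ is a Besicovitch set in $\R^{kn}$, one has $\hdim(B^k)\le\pdim(B^k)\le k\,\pdim(B)$, and part (i) in dimension $kn$ forces $k\,\pdim(B)\ge kn-1$, i.e.\ $\pdim(B)\ge n-\frac1k$ for every $k$; letting $k\to\infty$ gives $\pdim(B)=\umdim(B)=n$. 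Any ``absorption'' of the rotation parameter would have to reproduce this $\frac1k$ gain, and adding one coordinate at a time cannot do so. As written, neither part of the proposal closes.
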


After seeing these strong consequences we show some evidence in support of 
the new conjecture.

\begin{theorem}\label{t:main}

\begin{itemize}

\item[(i)] The Line Segment Extension Conjecture holds in the plane.

\item[(ii)] More generally, the Line Segment Extension Conjecture holds if the
Hausdorff dimension of the union of the line segments is less than two
or the Hausdorff dimension of the union of the lines is at most two.

\end{itemize}

\end{theorem}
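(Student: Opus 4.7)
The plan is to prove $\hdim B\le\hdim A$ (the reverse inequality is free from $A\subseteq B$). I would parametrize the family of lines $\{L_i\}$ by a subset $E$ of the affine Grassmannian $\cL_n$ of lines in $\R^n$, and bound both $\hdim A$ and $\hdim B$ in terms of $\hdim E$.

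For the upper bound, consider the map $\Phi:E\times\R\to\R^n$, $\Phi(e,t)=p(e)+tv(e)$, where $v(e)$ is the unit direction of $L_e$ and $p(e)$ is a Lipschitz choice of base point on $L_e$ (for instance the foot of the perpendicular from the origin). Then $\Phi$ is locally Lipschitz with image $B$, so (using $\hdim(E\times\R)=\hdim E+1$) we obtain
\[
\hdim B\le \hdim E+1.
\]

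For the lower bound, take a Frostman measure $\mu$ on $E$ of exponent slightly below $\hdim E$ and form the push-forward
\[
\nu(F)=\int_E \Hd^1(F\cap S_e)\,d\mu(e),
\]
a positive measure supported on $A$. Using $\Hd^1(S_e\cap B(x,r))\le 2r$, the Frostman condition $\nu(B(x,r))\lesssim r^{\hdim E+1}$ at $\nu$-typical $x$ reduces to the tubular estimate
\[
\mu\bigl(\{e:L_e\cap B(x,r)\ne\emptyset\}\bigr)\lesssim r^{\hdim E},
\]
controlling the $\mu$-mass of the $r$-neighborhood of the $(n-1)$-dimensional submanifold of $\cL_n$ consisting of lines through $x$. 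In the regime $\hdim E+1\le 2$ this is accessible by a Riesz-energy/Marstrand slicing argument carried out essentially in a two-dimensional slice of parameter space, and yields $\hdim A\ge \hdim E+1$.

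Combining the two bounds gives $\hdim A=\hdim B$ whenever $\hdim E+1\le 2$. Case (i) is in $\R^2$, where $\hdim B\le 2$ automatically. In case (ii), under $\hdim B\le 2$ the upper bound shows $\hdim E+1\ge\hdim B$, and a sharpening of the tubular estimate (sufficient only to produce $\hdim A\ge\hdim B$ rather than the full $\hdim A\ge\hdim E+1$) stays within the admissible range; under $\hdim A<2$, the lower-bound argument similarly places us in $\hdim E+1<2$ a posteriori. The main difficulty is the tubular estimate itself when $\hdim E+1>2$: the $\mu$-mass of lines passing near a fixed point can then concentrate in a manner that is a direct analogue of the Kakeya obstruction, so no unconditional Frostman control is available in that range, which is precisely why the argument halts at the dimension-two threshold.
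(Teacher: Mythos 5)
Your outline has the right global shape---reduce to segments crossing a common transversal, pass through the family of lines, and locate the obstruction at dimension two---but the step that carries all the weight is missing, and the intermediary you bound through is the wrong one. The tubular estimate $\mu(\{e:L_e\cap B(x,r)\ne\emptyset\})\lesssim r^{\hdim E}$ is precisely the hard part of the problem and is false as stated: a set of lines of positive $\mu$-measure can pass through a single point (a pencil), and more elaborate Nikodym-type configurations defeat any attempt to salvage it ``at $\nu$-typical $x$'' without further input. Deferring it to ``a Riesz-energy/Marstrand slicing argument'' is not a proof. Worse, the inequality $\hdim A\ge\hdim E+1$ that the estimate is meant to deliver is simply not true in general: take all unit segments lying in a fixed $2$-plane of $\R^3$, where $\hdim E$ can be $3$ while $\hdim A\le 2$; and even in the plane $\hdim E$ can equal $2$, so ``combining the two bounds whenever $\hdim E+1\le 2$'' does not cover case (i), and the ``sharpening'' you invoke for the remaining range is exactly the unproved content. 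The paper never bounds through the dimension of the line family at all. Its mechanism is: (a) Marstrand's \emph{slicing} theorem applied to $\cup\cL(\cS)$ produces positively many vertical lines $v_t$ meeting it in dimension $\ge u-1$; (b) point--line duality plus Marstrand's \emph{projection} theorem (Lemma~\ref{l:linesections}: the slice $(\cup\cL)\cap v_t$ is a copy of the projection $\pi_t(\cC(\cL))$, $\pi_t(a,b)=ta+b$, so its dimension is a.e.\ constant in $t$) upgrades this to almost every $t$; (c) since after a countable decomposition every segment crosses both $v_0$ and $v_1$, the slices of $\cup\cS$ and $\cup\cL(\cS)$ coincide for $t\in[0,1]$, and the Fubini-type inequality for Hausdorff dimension finishes. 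None of (a)--(c) appears in your proposal in usable form.

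Two further gaps. First, measurability: Frostman's lemma requires $E$ (or the sets you slice) to be analytic, and for an arbitrary collection of segments nothing here is even Lebesgue measurable; the paper spends Lemma~\ref{l:Borel} enlarging $\cS$ to a family with $\cL(\cS)$ Borel and $\cup\cL(\cS)$ analytic without changing $\hdim(\cup\cS)$, and you cannot start your energy argument without some such step. Second, the passage from the planar case to case (ii) is not a computation ``in a two-dimensional slice of parameter space'': the paper projects $\R^n$ onto a generic $2$-plane $W$ chosen by the Marstrand--Mattila projection theorem so that $\hdim p_W(\cup\cL(\cS))=\min(2,\hdim\cup\cL(\cS))$ and so that every segment projects to a nondegenerate segment, and then applies the planar theorem inside $W$; this reduction is what makes the hypothesis $\hdim(\cup\cL(\cS))\le 2$ (or $\hdim(\cup\cS)<2$) exactly the right one, and it is absent from your argument.
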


The above theorems are proved in Section~\ref{proofs}. 
In Section~\ref{examples} we present some examples that show that certain
variants of the Line Segment Extension Conjecture are not valid.

\section{Proof of the results}\label{proofs}

\begin{notation}
Let $\cS$ be a collection of line segments. By extending each line segment to a line 
we get a collection of lines that we denote by $\cL(\cS)$.

For any $a,b\in\R^{n-1}$ let $l(a,b)$ denote the line 
$(x_2,\ldots,x_{n-1})=x_1 a+b$. 

Let $\cL$ be a collection of lines in $\Rn$ such that none of them are
orthogonal to the $x_1$ axis. 
Then let
$$
\cC(\cL)=l^{-1}(\cL)=\{(a,b)\in\R^{n-1}\times\R^{n-1}\ : \ l(a,b)\in\cL\}.
$$
Note that $l$ is continuous, so if $\cL$ is Borel then so is $\cC(\cL)=l^{-1}(\cL)$.

Let $\hdim$, $\pdim$, $\mdim$ and $\umdim$ denote the
Hausdorff dimension, the packing dimension, the Minkowski dimension and the 
upper Minkowski dimension, respectively; see the definitions in \cite{Fa90} or \cite{Ma}.
\end{notation}

\begin{proof}[Proof of Theorem \ref{t:implication}]
First we present a well known short argument (see e.g. in \cite{Dv}) that shows that
if for every $n\ge 2$ any Besicovitch set in $\R^n$ has Hausdorff dimension at least
$n-1$ then every Besicovitch set in $\R^n$ has packing and upper Minkowski 
dimension $n$, and so
(i) implies (ii).
We will need the simple observation that the Cartesian product of Besicovitch sets is a 
Besicovitch set and the well known facts (see \cite{Ma}) 
that for any Borel sets $A$ and $B$ we have $\hdim(A)\le \pdim(A) \le \umdim(A)$ and
$\pdim(A\times B)\le\pdim(A)+\pdim(B)$.
Suppose that $B$ is Besicovitch set in $\R^n$ with 
$\pdim(B)<n$. Then for large enough $k$ we have $k\pdim(B)<kn-1$. 
Then $B^k=B\times\ldots\times B$ is a Besicovitch set in $\R^{kn}$ with 
$\hdim(B^k)\le\pdim(B^k)\le k\pdim(B) < kn-1$,
which would contradict our assumption.

Therefore it is enough to prove (i).
The claim of (i) follows from the observation that if we consider $\Rn$ 
inside the $n$-dimensional projective space then lines in a given direction
corresponds to lines through a given point of the "hyperplane at infinity", 
so after applying a projective map that takes the hyperplane at infinity to a
one codimensional plane of 
$\Rn$ then, by extending the line segments of the image of the Besicovitch set
we get a set that contains that hyperplane. 
To make the argument more accessible we present the same argument
more formally, without referring to projective geometry.

For $x\in \R\sm\{0\}, y\in\R^{n-1}$ let $P(x,y)=(\frac1x, \frac yx)$.
This is a locally Lipschitz map, so it preserves Hausdorff dimension.

We claim that $P$ maps the punched line $l(a,b)\sm\{(0,b)\}$ to the punched line
$l(b,a)\sm\{(0,a)\}$. Indeed,
\begin{align*}
P(l(a,b)\sm\{(0,b)\} & 
= P(\{(t,at+b)\ :\ t\in\R\sm\{0\}) \\
&= \left\{\left(\frac1t,\frac{at+b}{t}\right)\ :\ t\in\R\sm\{0\} \right\}\\
& = \{(u,a+ub)\ :\ u\in\R\sm\{0\} \} = l(b,a)\sm\{(0,a)\}.
\end{align*}

If $B$ is a Besicovitch set in $\Rn$ then for every $a\in\R^{n-1}$
 the set $B$ contains a subsegment $s(a)$ of a line $l(a,b)$ for some $b\in\R^{n-1}$
and we can clearly guarantee that $s(a)\su \Rn\sm(\{0\}\times \R^{n-1})$. 
Thus, by the claim of the above paragraph $P(s(a))$ is a line segment and its line
extension contains $(0,a)$. Therefore if $S=\{s(a)\ :\ a\in\R^{n-1}\}$ and 
$S'=\{P(s(a))\ :\ a\in\R^{n-1}\}$ then 
$\cup\cL(\cS')\supset \{0\}\times\R^{n-1}$, so $\hdim\cup\cL(\cS')\ge n-1$.
Since $B\supset\cup\cS$ and $P$ preserves Hausdorff dimension,
we have $\hdim B\ge \hdim(\cup\cS)=\hdim(\cup\cS')$.
Therefore applying the Line Segment Extending Conjecture to $\cS'$
would give $\hdim B\ge n-1$.
\end{proof}

Although in some arguments one might need measurability assumptions,
the following observation shows that we do not need to assume measurability.
The author learned this from M\'arton Elekes \cite{El}, 
and this proof is also essentially due to him.

\begin{lemma}\label{l:Borel}

\begin{itemize}
\item[(i)]
For any collection $\cS$ of closed line segments in $\R^n$ %($n\in\N$)  
%that contains no line segment orthogonal to the $x_1$ axis
there exists a collection $\cS'\supset\cS$ of 
closed line segments
with $\hdim(\cup \cS')=\hdim(\cup \cS)$ such that 
$\cL(\cS')$ is Borel.
\item[(ii)]
If $\cL$ is a Borel collection of lines then
$\cup \cL$ is analytic.
\end{itemize}
\end{lemma}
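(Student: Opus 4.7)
The plan for (i) is to enlarge $A=\cup\cS$ to a $G_\delta$ set of the same Hausdorff dimension and then take $\cS'$ to be all closed non-degenerate line segments contained in that enlargement. By the standard fact that any set of zero $\Hd^s$ measure has a $G_\delta$ superset of zero $\Hd^s$ measure (build it from slight open thickenings of $1/k$-optimal covers and intersect over $k$), intersecting such supersets for $s=\hdim A+1/k$ over $k\in\N$ produces a $G_\delta$ set $G\supset A$ with $\hdim G=\hdim A$. Put $\cS'=\{\text{closed non-degenerate segments }\su G\}$. Then $\cS\su\cS'$ and $\cup\cS'\su G$, so $\hdim(\cup\cS')=\hdim(\cup\cS)$.

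The crux is to show that $\cL(\cS')$ is Borel in the natural Polish topology on lines, which can be checked in each of the $n$ coordinate charts obtained by the symmetries of the chart from the Notation. Working in that chart, $l(a,b)\in\cL(\cS')$ iff the set $T(a,b)=\{t\in\R:(t,ta+b)\in G\}$ has nonempty interior, which in turn is equivalent to the existence of rationals $q_1<q_2$ with $[q_1,q_2]\su T(a,b)$. Writing $G=\cap_k U_k$ with $U_k$ open, for each rational pair $(q_1,q_2)$ and each $k$ the set
\[
E_k(q_1,q_2)=\{(a,b):(t,ta+b)\in U_k\text{ for all }t\in[q_1,q_2]\}
\]
is open, because its complement is the projection of the closed set $\{(a,b,t):(t,ta+b)\notin U_k,\,t\in[q_1,q_2]\}$ along the compact factor $[q_1,q_2]$. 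Intersecting over $k$ and then unioning over rational pairs yields a $G_{\delta\sigma}$ set in $(a,b)$-space, hence Borel.

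For (ii), equip the space of lines in $\Rn$ with its natural Polish topology, so that the incidence relation $I=\{(x,l):x\in l\}$ is closed in $\Rn$ times the space of lines. For Borel $\cL$, the intersection $I\cap(\Rn\times\cL)$ is Borel in that Polish product, and $\cup\cL$ is exactly its projection onto $\Rn$; continuous images of Borel sets in Polish spaces are analytic by definition.

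The main obstacle is producing a genuinely \emph{Borel} family rather than merely an analytic one: parameterizing segments by ordered endpoint pairs immediately presents $\cL(\cS')$ as the continuous image of a Borel subset of $\R^{2n}$, which only gives analyticity. What rescues Borelness in the argument above is choosing $G$ to be $G_\delta$ (not just Borel), so that the condition ``$l(a,b)$ contains a closed segment inside $G$'' rewrites as a countable Boolean combination of open conditions, each involving only a projection along the \emph{compact} interval $[q_1,q_2]$.
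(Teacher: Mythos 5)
Your proof is correct, and for part (i) it takes a genuinely different route to Borelness than the paper's. The paper first decomposes $\cS$ into countably many subfamilies, each consisting of segments of length at least some fixed $\de>0$ contained in a bounded open set $B$; it then takes a $G_\de$ hull $A'=\cap_k G_k$ of $\cup\cS$, lets $\cS_k$ be all closed segments of length at least $\de$ inside $G_k$, sets $\cS'=\cap_k\cS_k$, and proves $\cL(\cS')=\cap_k\cL(\cS_k)$, where the nontrivial inclusion rests on a compactness argument: if a line has a $\de$-long subsegment in each of the nested bounded open sets $G_1\supset G_2\supset\ldots$, then it has a single $\de$-long subsegment in all of them. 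Each $\cL(\cS_k)$ is open, so $\cL(\cS')$ is a $G_\de$ set of lines. You avoid both the preliminary decomposition and the length lower bound by taking $\cS'$ to be \emph{all} nondegenerate closed segments in the $G_\de$ hull $G$, and you replace the compactness argument by quantifying over rational closed parameter intervals $[q_1,q_2]$ together with the tube-lemma observation that projecting a closed set along the compact factor $[q_1,q_2]$ is a closed map; this exhibits $\cL(\cS')$ in each chart as a $G_{\de\sigma}$ set rather than a $G_\de$ set, which is still Borel and is all the lemma asks for. Your version is arguably cleaner (no reduction step, no lower bound on lengths); the paper's version yields slightly finer descriptive complexity per subfamily. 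For (ii) your argument is the same as the paper's in substance --- $\cup\cL$ is a continuous image (a projection) of a Borel subset of a Polish space, hence analytic --- the paper just writes the projection explicitly in each coordinate chart as $F(t,(a,b))=(t,ta+b)$ instead of invoking the closedness of the incidence relation.
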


\begin{proof}
(i) We can suppose that for some fixed $\de>0$ and bounded open set $B\su\R^n$
each $s\in \cS$ is contained in $B$ and has length at least $\de$ since 
we can write $\cS$ as a countable union $\cS=\cup_j \cS^j$ of such subcollections
and if ${\cS^j}'$ is good for $\cS^j$
then $\cup_j{\cS^j}'$ is good for $\cS$.

Let $A=\cup \cS$. Then $A\su B$.
Since for any $s$ any set is contained in a $G_\de$ set of the same  
$s$-dimensional Hausdorff (outer) 
measure (see \cite[471D (b)]{Fr}), we can take a
$G_\de$ set $A'\supset A$ with $A'\su B$ and $\hdim(A')=\hdim(A)$.
Write $A'$ in the form $A'=\cap_{k=1}^\infty G_k$, where each $G_k$ is open
and $B\supset G_1\supset G_2 \supset\ldots$.
Let $\cS_k$ be the collection of those closed line segments inside $G_k$
that has length at least $\de$.
Let $\cS'=\cap_{k=1}^\infty \cS_k$.

Then $\cS'\supset\cS$ since each $\cS_k$ contains $\cS$.
Since for any $k$, $\cup \cS'\su \cup \cS_k\su G_k$ by construction,
we have $\cup\cS'\su\cap_k G_k = A'$, 
so $\hdim \cup\cS \le \hdim \cup\cS' \le \hdim A' = \hdim \cup S$,
hence  $\hdim \cup\cS = \hdim \cup\cS'$.

We claim that $\cL(\cS')=\cap_{k=1}^\infty \cL(\cS_k)$.
Indeed if $l\in\cL(\cS')$ then $l$ is the extension of a line segment 
$s\in\cap_k \cS_k$, so $l\in\cL(\cS_k)$ for each $k$, 
hence $l\in \cap_{k=1}^\infty \cL(\cS_k)$.
Conversely, if $l\in\cap_{k=1}^\infty \cL(\cS_k)$
then for each $k$ the set $l\cap G_k$ contains a closed line segment 
of length at least $\de$.
Since $B$ is bounded, $B\supset G_1\supset G_2\supset\ldots$, 
this implies that there exists a closed line segment $s\su l$
of length at least $\de$ that is contained in every $G_k$, 
so $s\in \cS'$ and $l\in\cL(\cS')$.

Since $G_k$ is open, $\cL(\cS_k)$ is also open, so
$\cL(\cS')=\cap_k \cL(\cS_k)$ is Borel.

(ii)
For $i=1,\ldots,n$ let $\cL_i$ contain those lines of $\cL$ that are not orthogonal
to the $i$-th axis. Then $\cL=\bigcup_{i=1}^n \cL_i$, so it is enough to show
that each $\cup\cL_i$ is analytic. Since each $\cL_i$ is Borel, this means 
that we can assume that $\cL=\cL_i$ for some $i$ and without loss of generality
we can clearly suppose that $i=1$.
Then $\cC(\cL)$ is defined and $\cup\cL=F(\R\times\cC(\cL))$,
where $F(t,(a,b))=(t,ta+b)$, so $\cup\cL$ is the continuous
image of a Borel set, so it is analytic.
\end{proof}

\begin{lemma}\label{l:linesections}
If $\cL$ is a Borel collection of nonvertical lines of the plane then 
the intersection of $\cup\cL$ with almost every vertical line has the same Hausdorff
dimension.
%$$
%\hdim((\cup\cL)\cap...)=...
%$$
\end{lemma}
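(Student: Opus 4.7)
The plan is to reduce the claim to Marstrand's projection theorem for planar sets. Since every line in $\cL$ is nonvertical, we use the parametrization $l(a,b)=\{(x_1,ax_1+b):x_1\in\R\}$, so that $E:=\cC(\cL)\subset \R^2$ is Borel. The intersection of $\cup\cL$ with the vertical line $\{x_1=t\}$ is then exactly $\{t\}\times\phi_t(E)$, where $\phi_t\colon\R^2\to\R$ is the linear functional $\phi_t(a,b)=ta+b$; hence this slice has the same Hausdorff dimension as $\phi_t(E)\subset\R$.

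Next, I would observe that $\phi_t(a,b)=\langle (a,b),(t,1)\rangle$ agrees, up to the nonzero scalar $\sqrt{t^2+1}$, with the orthogonal projection of $\R^2$ onto the line through the origin spanned by $(t,1)$. Since nonzero dilation preserves Hausdorff dimension, $\hdim\phi_t(E)=\hdim P_{\theta(t)}(E)$, where $P_\theta$ denotes orthogonal projection onto the line at angle $\theta\in[0,\pi)$ and $t\mapsto\theta(t)$ is a continuous bijection from $\R$ onto $(0,\pi)$, smooth with nonvanishing derivative away from $t=0$. In particular it sends Lebesgue null sets to Lebesgue null sets, so ``almost every $\theta$'' translates to ``almost every $t$''.

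Finally, Marstrand's projection theorem, applied to the Borel set $E\subset\R^2$, asserts that $\hdim P_\theta(E)=\min(\hdim E,1)$ for almost every $\theta\in[0,\pi)$. Pulling back through $\theta(t)$ yields
$$\hdim\bigl(\cup\cL\cap\{x_1=t\}\bigr)=\min(\hdim\cC(\cL),1)\qquad\text{for a.e. }t\in\R,$$
which is the desired constant value; in fact this identifies the common slice dimension explicitly.

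The only ingredients beyond the paper's own notation are the Borelness of $\cC(\cL)$ (already noted in the Notation block, using continuity of $l$) and the classical Marstrand projection theorem for analytic planar sets, so I do not foresee any real obstacle.
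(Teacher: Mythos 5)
Your proof is correct and takes essentially the same route as the paper's: identify the slice $(\cup\cL)\cap\{x_1=t\}$ with $\{t\}\times\pi_t(\cC(\cL))$ where $\pi_t(a,b)=ta+b$, observe that this map is a scaled orthogonal projection onto the line spanned by $(t,1)$, and invoke Marstrand's projection theorem for the Borel set $\cC(\cL)$. You merely spell out the change of variables $t\mapsto\theta(t)$ and the explicit common value $\min(\hdim\cC(\cL),1)$, which the paper leaves implicit.
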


\begin{proof}
Let $v_t$ denote the vertical line $x=t$ and let $B=\cC(\cL)$.
One can easily check (or see \cite[proof of 18.11]{Ma})
that $(\cup\cL)\cap v_t=\{t\}\times \pi_t(B)$, where $\pi_t(x,y)=tx+y$, 
so $(\cup\cL)\cap v_t$ is similar to the projection of $B$ to the line $x=ty$.
Then Marstrand's projection theorem \cite{Mar} gives the claim of the Lemma.
\end{proof}

Now we are ready to prove Theorem~\ref{t:main}. First we prove its 
first part and then we show how that implies the more general second part.

\begin{theorem}\label{t:plane}
Let $\cS$ be a collection of line segments in $\R^2$ and 
$\cL(\cS)$ be the collection
of lines we get by extending each line segment of $\cS$. 
Then $\hdim(\cup\cS)=\hdim(\cup\cL(\cS))$.
\end{theorem}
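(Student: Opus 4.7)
The plan is to prove the nontrivial inequality $\hdim(\cup\cL(\cS))\le\hdim(\cup\cS)$; the reverse is automatic. By Lemma~\ref{l:Borel}(i), after enlarging $\cS$ without changing $\hdim(\cup\cS)$, I may assume $\cL:=\cL(\cS)$ is Borel. A standard countable decomposition reduces to the case in which all segments lie in a fixed bounded region, have slopes in $[-M,M]$, have length at least $\de>0$, and have $x_1$-projections covering a common interval $[0,\de_0]$ (vertical-slope segments are handled by swapping coordinates; the common interval is arranged by decomposing by which dyadic subinterval of the $x_1$-axis is contained in the projection). Then $B:=\cC(\cL)\su\R^2$ is a bounded Borel set. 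If $\hdim(\cup\cL)\le 1$ the conclusion is immediate (either $\cS=\emptyset$ or $\cup\cS$ contains a segment), so assume $\hdim(\cup\cL)>1$ and write $d:=\min(1,\hdim B)$, which by Lemma~\ref{l:linesections} equals $\hdim(\cup\cL\cap v_t)$ for a.e.\ $t$.

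For the lower bound $\hdim(\cup\cS)\ge 1+d$: given $\tau<d$, inner regularity of Hausdorff measure on Borel sets gives a compact $B_0\su B$ with $0<\Hd^\tau(B_0)<\infty$, and Marstrand's projection theorem applied to $B_0$ yields $\Hd^\tau(\pi_t(B_0))>0$ for a.e.\ $t$. Setting $F(t,a,b):=(t,ta+b)$, the analytic set $F([0,\de_0]\times B_0)$ is contained in $\cup\cS$ (by the common-interval reduction) and has vertical slice $\{t\}\times\pi_t(B_0)$ at $v_t$ for $t\in[0,\de_0]$. The Federer--Fubini inequality $\int\Hd^\tau(E\cap v_t)\,dt\le c\,\Hd^{1+\tau}(E)$ then yields $\hdim(\cup\cS)\ge 1+\tau$, and letting $\tau\nearrow d$ gives $\hdim(\cup\cS)\ge 1+d$.

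For the matching upper bound $\hdim(\cup\cL)\le 1+d$: for any line $L$ whose direction is not shared by any line in $\cL$ (for instance any line whose slope exceeds $M$ in absolute value), the intersection $\cup\cL\cap L$ is the Lipschitz image of $B$ under $(a,b)\mapsto l(a,b)\cap L$, so $\hdim(\cup\cL\cap L)\le\hdim B$ and, being contained in a line, $\le d$. Meanwhile, for any $s_0<\hdim(\cup\cL)$, inner regularity applied to the analytic set $\cup\cL$ (analytic by Lemma~\ref{l:Borel}(ii)) gives a compact $K\su\cup\cL$ with $\Hd^{s_0}(K)>0$, and Marstrand's slicing theorem in lower-bound form supplies, for almost every direction, a positive-measure family of parallel lines along that direction for which the corresponding slice of $K$ has $\hdim\ge s_0-1$. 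Choosing such a direction both in Marstrand's good set and outside $[-M,M]$ forces $s_0-1\le d$; letting $s_0\nearrow\hdim(\cup\cL)$ gives $\hdim(\cup\cL)\le 1+d\le\hdim(\cup\cS)$.

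The main obstacle I anticipate is the dual invocation of Marstrand-type results---projection on $B_0\su B$ for the lower bound, and slicing on compact subsets of the analytic set $\cup\cL$ for the upper bound---combined with the choice of a slicing direction simultaneously in Marstrand's generic good set and outside the bounded slope range of $\cL$ (so that the slice admits the required Lipschitz parameterization by $B$). Once the Borel structure of $B$ is secured via Lemma~\ref{l:Borel}(i), the measurability bookkeeping is routine and the Fubini/slicing manipulations are standard; the crux is orchestrating the two Marstrand theorems together with the slicing-direction choice.
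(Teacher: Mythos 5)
Your proposal is correct, and although it draws on the same toolbox as the paper --- the reduction to segments crossing a common vertical strip, Lemma~\ref{l:Borel}, Marstrand's projection and slicing theorems, and the Fubini-type inequality for vertical slices --- the architecture is genuinely different. The paper runs a one-sided argument: Marstrand's slicing theorem gives positively many vertical slices of $\cup\cL(\cS)$ of dimension at least $u-1$; Lemma~\ref{l:linesections} upgrades ``positively many'' to ``almost every''; on the common strip the slices of $\cup\cL(\cS)$ and $\cup\cS$ coincide; and the Fubini inequality then yields $\hdim(\cup\cS)\ge u$ for every $u<\hdim(\cup\cL(\cS))$. You instead sandwich both sets around the explicit value $1+\min(1,\hdim\cC(\cL))$: the lower bound on $\cup\cS$ comes from Marstrand's projection theorem applied directly to the parameter set $\cC(\cL)$ (so you use the mechanism of Lemma~\ref{l:linesections} rather than its statement), and the upper bound on $\cup\cL$ comes from slicing in a direction transversal to every line of $\cL$, where each slice is a Lipschitz image of $\cC(\cL)$. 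Your route is longer but gives the sharper conclusion $\hdim(\cup\cS)=\hdim(\cup\cL(\cS))=1+\min\bigl(1,\hdim\cC(\cL)\bigr)$ for each reduced piece; note that your upper-bound step does not actually need the slicing theorem, since $\cup\cL=F(\R\times\cC(\cL))$ with $F$ locally Lipschitz and $\hdim(\R\times B)\le 1+\hdim(B)$ by the product inequality $\hdim(A\times B)\le\hdim(B)+\pdim(A)$. Two small points of bookkeeping, neither fatal: at the critical exponent $\tau$ Marstrand's theorem guarantees $\hdim\pi_t(B_0)=\tau$ rather than $\Hd^\tau(\pi_t(B_0))>0$, but running the Fubini step at any $\tau'<\tau$ and letting $\tau'\nearrow\tau\nearrow d$ closes this; and the countable decomposition and the enlargement of Lemma~\ref{l:Borel} do not commute for free (whichever is done second can disturb the property secured by the first), so the open conditions defining your reduced class should be built into the collections $\cS_k$ inside the proof of Lemma~\ref{l:Borel} --- a routine adjustment that the paper's own ordering of these two reductions also tacitly requires.
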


\begin{proof}
We can suppose that each $s\in\cS$ intersects two fixed segments $e$ and $f$ that are opposite sides of a rectangle since we can decompose $\cS$ as a countable union of subcollections with this property and if the result can be applied to each subcollection
then it follows for the union $\cS$.
 By Lemma~\ref{l:Borel} we can also suppose that $\cL(\cS)$ is Borel 
and $\cup \cL(\cS)$ is analytic.

Fix an arbitrary $u$ such that 
$u<\hdim(\cup\cL(\cS))$ and the proof will be completed by showing that
$\hdim(\cup\cS)\ge u$.
If $u=1$ then this is clear, 
so we can suppose that $u>1$. 
Then we can apply  Marstrand's slicing theorem \cite{Mar} (to $\cup\cL(\cS)$),
which states that if $u>1$ and an analytic subset $A$ of the plane has positive 
$u$-dimensional Hausdorff measure
then in almost every direction, positively many lines meet $A$ in a set 
of Hausdorff dimension at least $u-1$.
%Fix an arbitrary $u$ such that 
%$u<\hdim(\cup\cL(\cS))$ and the proof will be completed by showing that
%$\hdim(\cup\cS)\ge u$.
%Applying Marstrand's slicing theorem \cite{Mar} to $\cup\cL(\cS)$ we get that
Therefore we get that 
for almost every unit vector $w$ there exists a set $T\su\R$ of positive
Lebesgue measure such that for any $t\in T$ we have
$\hdim((\cup\cL(\cS))\cap l_{w,t})\ge u-1$, where $l_{w,t}$ is the line 
$\{a\in\R^2:a\cdot w = t\}$.

Choose distinct parallel lines $l_0$ and $l_1$ so that both of them separate 
$e$ and $f$ and they are orthogonal to such a non-exceptional unit vector $w$.
%Let $w$ be such a non-exceptional unit vector chosen so
%that it is close enough to the orthogonal of $e$ and $f$
%to have two distinct lines $l_0$ and $l_1$ orthogonal to $w$ that 
%separate $e$ and $f$. 
Then every segment of $\cS$ intersects both $l_0$ and $l_1$.
Without loss of generality we can suppose that $w=(1,0)$, $l_0=v_0$ and $l_1=v_1$,
where $v_t$ denotes the vertical line $x=t$. Thus by the choice of $w$ and $T$ we have
$$
\hdim((\cup\cL(\cS))\cap v_t)\ge u-1 \qquad (\forall t\in T).
$$

Since $T$ has positive Lebesgue measure, by Lemma~\ref{l:linesections} we get that
$$
\hdim((\cup\cL(\cS))\cap v_t) \ge u-1 \quad \textrm{for almost every } t\in\R.
$$
Since every segment of $\cS$ intersects both $l_0=v_0$ and $l_1=v_1$
we have
$$
(\cup\cL(\cS))\cap v_t=(\cup\cS)\cap v_t \qquad (\forall t\in[0,1]).
$$
Thus we obtained that 
$$
\hdim((\cup\cS)\cap v_t) \ge u-1 \quad \textrm{for almost every } t\in[0,1],
$$
which implies (see e.g. \cite[Theorem 7.7]{Ma}) 
$\hdim(\cup\cS)\ge u$, which completes the proof.
\end{proof}

\begin{cor}
Let $\cS$ be a collection of line segments in $\R^n$ and 
$\cL(\cS)$ be the collection
of lines we get by extending each line segment of $\cS$. 
Suppose also that
$\hdim(\cup \cS)<2$ or $\hdim(\cup \cL(\cS))\le 2$.
Then $\hdim(\cup\cS)=\hdim(\cup\cL(\cS))$.
\end{cor}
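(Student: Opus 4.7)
The plan is to reduce to the planar case (Theorem~\ref{t:plane}) via a generic orthogonal projection onto a 2-plane, together with Marstrand's projection theorem. Since $\hdim(\cup\cS)\le\hdim(\cup\cL(\cS))$ is trivial, only the reverse inequality needs attention. If $\hdim(\cup\cS)\ge 2$, then the hypothesis forces $\hdim(\cup\cL(\cS))\le 2$, and the sandwich $2\le\hdim(\cup\cS)\le\hdim(\cup\cL(\cS))\le 2$ closes that case. So I would concentrate on $\hdim(\cup\cS)<2$, and, invoking Lemma~\ref{l:Borel}(i) to enlarge $\cS$ to some $\cS'\supset\cS$ with $\hdim(\cup\cS')=\hdim(\cup\cS)<2$ and $\cL(\cS')$ Borel, I may further assume that $\cL(\cS)$ is Borel, so that $\cup\cL(\cS)$ is analytic; the conclusion for $\cS$ then follows from the one for $\cS'$ via $\cup\cS\su\cup\cL(\cS)\su\cup\cL(\cS')$.

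The heart of the argument is the identity
$$
\hdim(\pi_V(\cup\cL(\cS))) = \hdim(\pi_V(\cup\cS))
$$
for \emph{every} 2-plane $V\subset\Rn$, where $\pi_V$ denotes the orthogonal projection onto $V$. To prove it I would split $\cS=\cS^0\cup\cS^1$ according to whether the corresponding line is parallel to $V^\perp$. Segments in $\cS^0$ and their line extensions collapse to the same single point under $\pi_V$, so $\pi_V(\cup\cL(\cS^0))=\pi_V(\cup\cS^0)$ literally. For $\cS^1$, each $\pi_V(s)$ is a genuine line segment in $V\cong\R^2$ whose line extension is exactly $\pi_V(l)$, so Theorem~\ref{t:plane} applied inside $V$ gives $\hdim(\pi_V(\cup\cL(\cS^1)))=\hdim(\pi_V(\cup\cS^1))$, and the identity follows from the finite stability of Hausdorff dimension.

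Next I would apply Marstrand's projection theorem to the analytic set $\cup\cL(\cS)$: for almost every 2-plane $V\subset\Rn$,
$$
\hdim(\pi_V(\cup\cL(\cS))) = \min(\hdim(\cup\cL(\cS)),2).
$$
Combining this with the above identity and the trivial bound $\hdim(\pi_V(\cup\cS))\le\hdim(\cup\cS)<2$ gives $\min(\hdim(\cup\cL(\cS)),2)<2$, hence $\hdim(\cup\cL(\cS))<2$ and $\hdim(\cup\cL(\cS))\le\hdim(\cup\cS)$, as desired.

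The main obstacle I expect is the measurability bookkeeping in the first paragraph: the hypothesis $\hdim(\cup\cL(\cS))\le 2$ does not obviously survive the passage from $\cS$ to the enlarged $\cS'$ provided by Lemma~\ref{l:Borel}(i), since adding segments can produce new lines. What saves the argument is the observation that the sandwich handles $\hdim(\cup\cS)\ge 2$ directly, leaving only the case $\hdim(\cup\cS)<2$ to which the dimension-preserving enlargement transfers harmlessly.
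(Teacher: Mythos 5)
Your proof is correct and follows essentially the same route as the paper: make $\cup\cL(\cS)$ analytic via Lemma~\ref{l:Borel}, project onto a Marstrand--Mattila-generic $2$-plane, and apply Theorem~\ref{t:plane} inside that plane. The only (harmless) difference is that the paper first decomposes $\cS$ into subcollections of roughly constant direction so that a suitable generic plane projects \emph{every} segment to a non-degenerate segment, whereas you keep $\cS$ whole and split off the segments with direction in $V^\perp$, observing that these collapse to the same points as their line extensions; your explicit case split on $\hdim(\cup\cS)\ge 2$ versus $<2$ also tidies up the measurability bookkeeping that the paper leaves implicit.
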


\begin{proof}
Similarly as in the proof of Theorem~\ref{t:plane}, by decomposing $\cS$, we get that
we can suppose that every segment of $\cS$ has roughly the same direction and,
by Lemma~\ref{l:Borel}, we can suppose that $\cup\cL(\cS)$ is analytic.

Let $p_W$ denote the orthogonal projection to the subspace $W$.
By the Marstrand-Mattila projection theorem \cite{Ma75} for almost every $2$-dimensional
subspace $W\su \Rn$ we have
%$\hdim p_W \cup\cS = \max(2,\hdim\cup\cS)=\hdim\cup\cS$ and
$$
\hdim p_W \left(\cup\cL(\cS)\right) = \min(2,\hdim\cup\cL(\cS)).
$$
Since the segments of $\cS$ have roughly the same direction,
we can fix a plane $W$ with the above property so that the projection of every segment 
of $\cS$ into $W$ is a non-degenerated segment. 
Thus 
$$
p_W (\cup\cS) = \cup \{p_W (s)\ : s\in \cS\} \ \textrm{ and }\ 
p_W (\cup\cL(\cS)) = \cup \cL (\{p_W (s)\ : s\in \cS\}).
$$
Therefore, applying Theorem~\ref{t:plane} in the plane $W$ to the projected
segments we get that
$$
\hdim\left( \cup \{p_W (s)\ : s\in \cS\} \right)=
\hdim\left( \cup \cL(\{p_W (s)\ : s\in \cS\}) \right).
$$

Combining the displayed equalities of the above paragraph with obvious inequalities we get that
$$
\hdim\cup\cL(\cS)\ge \hdim\cup\cS \ge \hdim p_W(\cup\cS)=
\min(2,\hdim\cup\cL(\cS)).
$$
Since $\hdim(\cup \cS)<2$ or $\hdim(\cup \cL(\cS))\le 2$, we get that
$\hdim(\cup\cS)=\hdim(\cup\cL(\cS))$.
\end{proof}

\section{Concluding remarks}\label{examples}

The following example shows that the Minkowski dimension version of the 
Line Segment Extension Conjecture would be false even in the plane.

\begin{example}
Let A be the union of the line segments with endpoints $((2m-1)2^{-n}, 0)$ and 
 $((2m-1)2^{-n}, 2^{-n})$, where $n=1,2,\ldots$ and $m=1,\ldots,2^{m-1}$,
and let $B$ be the set we obtain by extending the line segments of $A$ to unit line
segments in the square $[0,1]\times [0,1]$. 

Then $B$ is dense in $[0,1]\times [0,1]$, so it has Minkowski dimension $2$. 
But the Minkowski dimension of $A$ is $1$, which can be calculated directly or 
by noticing that $A$ is an inhomogeneous self-similar set (see the definition in
\cite{Fra}) and then the formula of \cite[Corollary 2.2]{Fra} can be applied.
\end{example}

We do not know if such an example exists with the extra requirement
that the lengths of the line segments must have a positive lower bound. 
We do not know if for packing dimension the Line Segment Extension Conjecture
would hold at least in the plane.

The next example shows that linearity must be used, 
Line Segment Extension Conjecture would be false for general curves instead of
line segments.

\begin{example}
We construct an embedding of the infinite rooted quadrary tree into $\R^3$. 
Consider the points of the form $((2k-1)2^{-n}, (2l-1)2^{-n}, 2^{-n})$, where
$n=0,1,2,\ldots$ and $k,l=1,\ldots,2^{n-1}$ and join each point
$((2k-1)2^{-n}, (2l-1)2^{-n}, 2^{-n})$ with a line segment to the four
points of the form $((4k-2\pm 1)2^{-n-1}, (4l-2\pm 1)2^{-n-1}, 2^{-n-1})$.

Then the infinite branches of this tree are rectifiable curves. The union of these
curves is a countable union of line segments, so it has Hausdorff dimension $1$.
But if we extend each of these curves with its limit point then we get all points
of the square $[0,1]\times[0,1]\times\{0\}$, so this way we get a set of 
Hausdorff dimension $2$.
%
%With a bit extra effort we can also make the above curves smooth.
\end{example}

%\bigskip

%\noindent \emph{Acknowledgements:} 

%\bigskip

%\noindent
%\textsc{Department of Analysis, E\"otv\"os Lor\'and University, \\ 
%P\'az\-m\'any P\'e\-ter s\'et\'any 1/c, H-1117 Budapest, Hungary}

%\noindent
%\textit{Email address}: \verb+tamas.keleti@gmail.com+\\

\end{document}